\crefname{section}{Section}{Sections}
\crefname{subsection}{\S}{\S\S}
\crefname{subsubsection}{\S}{\S\S}
\theoremstyle{plain}
\newtheorem{lemma}{Lemma}[section]
\newtheorem{proposition}[lemma]{Proposition}
\newtheorem{theorem}[lemma]{Theorem}
\newtheorem{question}[lemma]{Question}
\theoremstyle{nonumberplain}
\theoremstyle{plain}
\newtheorem{definition}[lemma]{Definition}
\newtheorem{remark}[lemma]{Remark}
\crefname{definition}{definition}{definitions}
\crefname{ex}{example}{examples}
\crefname{remark}{remark}{remarks}
\crefname{convention}{convention}{conventions}
\crefname{notation}{notation}{notations}
\crefname{table}{table}{tables}
\crefname{lemma}{lemma}{lemmas}
\crefname{proposition}{proposition}{propositions}
\crefname{corollary}{corollary}{corollaries}
\crefname{theorem}{theorem}{theorems}
\crefname{enumi}{}{}
\crefname{assumption}{assumption}{Assumptions}
\crefname{equation}{}{}
\numberwithin{equation}{section}
\theoremstyle{nonumberplain}
\newtheorem{proof}{Proof}
\newcommand\pff[3]{\newtheorem{#1}{Proof of \Cref{#2}#3}}
\newcommand\bC{{\mathbb C}}
\newcommand\bL{{\mathbb L}}
\newcommand\bR{{\mathbb R}}
\newcommand\bS{{\mathbb S}}
\newcommand\bT{{\mathbb T}}
\newcommand\bZ{{\mathbb Z}}
\newcommand\cT{{\mathcal T}}
\DeclareMathOperator{\End}{\mathrm{End}}
\newcommand{\cat}[1]{\textsc{#1}}
\title{Shilov boundaries determine irreducible bounded symmetric domains}
\author{Alexandru Chirvasitu}
\begin{document}

\date{}

\newcommand{\Addresses}{{
  \bigskip
  \footnotesize

  \textsc{Department of Mathematics, University at Buffalo, Buffalo,
    NY 14260-2900, USA}\par\nopagebreak \textit{E-mail address}:
  \texttt{achirvas@buffalo.edu}

}}

\maketitle

\begin{abstract}
  We prove that irreducible symmetric domains are uniquely determined by the homotopy equivalence classes of their Shilov boundaries.
\end{abstract}

\noindent {\em Key words: bounded symmetric domain; Shilov boundary; symmetric space; homogeneous space; homotopy equivalence}

\vspace{.5cm}

\noindent{MSC 2020: 32M15; 57T15; 57T20}


\section*{Introduction}

Let $D\subset V\cong \bC^n$ be a circular bounded symmetric domain (see \Cref{se:prel} below for a reminder on the relevant definitions). Recall:

\begin{definition}\label{def:shil}
  The {\it Shilov (or Bergman-Shilov) boundary} $\check{S}(D)$ of a bounded symmetric domain $D\subset V$ is the smallest subset of the boundary $\partial D$ on which all functions continuous on $\overline{D}$ and holomorphic on $D$ attain their maxima.
\end{definition}

As explained in \cite[\S 3.1]{kw}, $\check{S}(D)$ is indeed well defined (i.e. always exists and is unique, for any $D$).  It has many other characterizations that will not play an important role below (e.g. \cite[Theorem 6.5]{loos}):
\begin{itemize}
\item the extremal points of $\overline{D}$ (i.e those that do not lie in the interior of any segment contained in $\overline{D}$);
\item the points on the boundary $\partial D$ attaining the largest distance from the origin with respect to the Bergman metric (\cite[\S VIII.3]{helg} or \cite[\S 1]{loos});
\item the manifold of {\it maximal tripotents} in the Jordan triple system attached to $D$ (notions we will not recall here, referring to \cite[\S\S 3 and 6]{loos} instead).
\end{itemize}

Attached to all of this ``classical'' geometry is the non-commutative {\it Toeplitz $C^*$-algebra} $\cT(D)$, generated by the Hardy-Toeplitz operators with continuous symbol on the {\it Hardy Hilbert space} $H^2(\check{S}(D))$; see e.g. \cite{bc-wh,bck-wh,cob-sing,up0,up-alg,up-bk,up-surv}, where the structure of the $C^*$-algebras $\cT(D)$ is studied (and the pertinent concepts recalled).

Part of \cite{chi-td}, which motivates the present note, is concerned with reconstructing the domain $D$ from its non-commutative counterpart, i.e. the $C^*$-algebra $\cT(D)$.  This is always possible, in the following sense (\cite[Theorem 3.9]{chi-td}):

\begin{theorem}\label{th:diiso}
  The Hardy-Toeplitz $C^*$-algebras $\cT(D_i)$ associated to two bounded symmetric domains are stably isomorphic if and only if $D_i$ are isomorphic. 
\end{theorem}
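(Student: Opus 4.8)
\medskip
\noindent\emph{Proof proposal.}
Direction $(\Leftarrow)$ is routine: a biholomorphism $\varphi\colon D_1\to D_2$ of bounded symmetric domains extends to a homeomorphism $\ol{D_1}\to\ol{D_2}$ carrying $\check S(D_1)$ onto $\check S(D_2)$ (the Shilov boundary is intrinsic — e.g. it is the extreme-point set of the closure, and $\mathrm{Aut}(D)^\circ$ acts continuously on $\ol D$ while any biholomorphism is a composite of an automorphism with a linear map by Cartan's theorem), and transporting the invariant boundary measure along $\varphi$ gives a unitary $H^2(\check S(D_1))\to H^2(\check S(D_2))$ intertwining the Hardy--Toeplitz operator of symbol $f$ with that of $f\circ\varphi^{-1}$; hence $\cT(D_1)\cong\cT(D_2)$, a fortiori stably. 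The plan for $(\Rightarrow)$ is to read the biholomorphism type of $D$ off the internal structure of $\cT(D)$.

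I would start from the known structure theory of $\cT(D)$ (Upmeier and the references cited above): $\cT(D)$ is postliminal and carries a canonical finite composition series
\[
  0=\cI_0\subset\cI_1\subset\cdots\subset\cI_\ell=\cT(D),\qquad \ell=\mathrm{rank}(D)+1,
\]
indexed by the (finitely many) $\mathrm{Aut}(D)^\circ$-orbits in $\ol D$ ordered by orbit-closure: $\cI_1$ is the ideal of compacts on $H^2(\check S(D))$, coming from the open orbit $D$; $\cT(D)/\cI_{\ell-1}\cong C(\check S(D))$, coming from the closed orbit $\check S(D)$; and each $\cI_k/\cI_{k-1}$ is stably isomorphic to $C_0(\Sigma_k)\otimes\cK$ for an explicit homogeneous space $\Sigma_k$, with $\Sigma_\ell=\check S(D)$. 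The point to check is that this series is \emph{intrinsic} — it agrees with, or is refined by, the canonical series of a type I $C^*$-algebra built from largest continuous-trace ideals — so it is preserved by $C^*$-isomorphisms and by $A\mapsto A\otimes\cK$. A stable isomorphism $\cT(D_1)\otimes\cK\cong\cT(D_2)\otimes\cK$ therefore induces a homeomorphism $\mathrm{Prim}\,\cT(D_1)\cong\mathrm{Prim}\,\cT(D_2)$ that matches strata with strata; in particular $\mathrm{rank}(D_1)=\mathrm{rank}(D_2)$ and $\check S(D_1)\cong\check S(D_2)$, along with homeomorphisms $\Sigma^{(1)}_k\cong\Sigma^{(2)}_k$ and an identification of $K$-theory, should any of that extra data be wanted later.

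Next I would reduce to the irreducible case. For $D=\prod_jD_j$ with the $D_j$ irreducible one has $\check S(D)=\prod_j\check S(D_j)$ and $\cT(D)\cong\bigotimes_j\cT(D_j)$, hence $\mathrm{Prim}\,\cT(D)\cong\prod_j\mathrm{Prim}\,\cT(D_j)$ compatibly with the stratifications; the orbit poset of an irreducible rank-$r_j$ factor is a chain on $r_j+1$ vertices, that of $D$ is the product of these chains, and the stratum of $\mathrm{Prim}\,\cT(D)$ over a multi-index is the product of the corresponding factor strata. Since the open-orbit stratum $\Sigma^{(i)}_0$ of each factor is a single (open) point, the strata lying over a single ``coordinate axis'' of this product-of-chains poset assemble into the spectrum of an ideal of $\cT(D)$ stably isomorphic to $\cT(D_j)$; locating the axes uses only that a finite product of chains factors uniquely into chains. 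So a stable isomorphism $\cT(D_1)\otimes\cK\cong\cT(D_2)\otimes\cK$ yields a bijection between the irreducible factors of $D_1$ and those of $D_2$ under which matched factors have stably isomorphic Toeplitz algebras, and we are reduced to $D_1,D_2$ irreducible.

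It then remains to prove that for irreducible $D$ the homeomorphism type of $\check S(D)$ — enriched, if necessary, by the other invariants extracted above — determines $D$ up to biholomorphism, which is a finite check against Cartan's classification of irreducible bounded symmetric domains: for each of the four classical families $I_{p,q},II_n,III_n,IV_n$ and the two exceptional domains the Shilov boundary is a specific compact homogeneous space (complex Stiefel manifolds $U(q)/U(q-p)$ in type $I$, Lagrangian-type $U(n)/O(n)$ in type $III$, the quadric $\bigl(S^1\times S^{n-1}\bigr)/\bZ_2$ in type $IV$, and so on), and one verifies that these are pairwise non-homeomorphic using invariants such as dimension, connectivity, (co)homology, and the recovered rank. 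I expect this geometric step to be the main obstacle: the $C^*$-algebraic half is bookkeeping with established structure theory, whereas here the argument becomes classification-dependent and carries the computational weight. It is precisely this step that the present note sharpens, by showing that in the irreducible case the homotopy type of $\check S(D)$ alone already pins down $D$.
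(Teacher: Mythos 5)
First, a point of orientation: this paper does not prove \Cref{th:diiso} at all --- it is quoted verbatim from \cite[Theorem 3.9]{chi-td}, and the present note only supplies the geometric ingredient (\Cref{th:shildet}) that the introduction frames as a possible alternative route. So there is no in-paper proof to compare yours against; your proposal has to be judged on its own merits as a reconstruction of the argument of \cite{chi-td}.

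As an outline it is sensible, and its final step is exactly right: reducing to irreducible factors and then invoking the fact that Shilov boundaries separate irreducible domains is precisely the division of labor the introduction describes, and the irreducible step is \Cref{th:shildet} (which the paper proves in the stronger homotopy-equivalence form). Your reduction to the irreducible case is also necessary and not cosmetic, since by \Cref{re:notforred} the Shilov boundary alone does not determine a reducible domain ($\check S(IV_{2n})\cong \bS^1\times\bS^{2n-1}\cong\check S(I_{1,1}\times I_{n,1})$), so one genuinely needs the extra stratification data you propose to extract.

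The genuine gap is the sentence you yourself flag as ``the point to check'': that Upmeier's composition series $0=\cI_0\subset\cdots\subset\cI_\ell=\cT(D)$ is \emph{intrinsic}, i.e.\ preserved by an arbitrary $*$-isomorphism $\cT(D_1)\otimes\cK\cong\cT(D_2)\otimes\cK$. Everything downstream depends on this: the matching of strata, the recovery of $\check S(D)$ as the closed stratum, and the entire product-of-chains argument (which needs the orbit poset to be readable off the topology of $\mathrm{Prim}$ in a way that identifies each stratum as a space, not just as a poset element). You assert that the series ``agrees with, or is refined by'' the canonical series of largest continuous-trace ideals, but you give no argument, and this is exactly where the $C^*$-algebraic content of \cite{chi-td} lives --- it is not bookkeeping. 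Without it, a stable isomorphism gives you only a homeomorphism of primitive ideal spaces with no guaranteed compatibility with the geometric stratification. A secondary, smaller issue: in the $(\Leftarrow)$ direction you need the biholomorphism to induce a map of Shilov boundaries preserving the relevant measure class so that composition is unitary on Hardy spaces; for circled realizations this follows from linearity of the isomorphism (Cartan), but as written the claim is only gestured at. (Also a typo: the type-$I$ Shilov boundary is $U(p)/U(p-q)$, not $U(q)/U(q-p)$.)
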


One piece of information that is immediately extractable from $\cT(D)$ is the Shilov boundary $\check{S}(D)$ as a topological space: by \cite[Theorem 3.12]{up-alg} $\check{S}(D)$ is homeomorphic to the space of characters (i.e. multiplicative continuous functionals) of $\cT(D)$. In light of this, it would be natural to try to approach \Cref{th:diiso} by recovering $D$ from $\check{S}(D)$ alone, divorcing the discussion from all of its non-commutative-topology baggage:

\begin{question}
  Suppose the Shilov boundaries $\check{S}(D_i)$ of two bounded symmetric domains $D_i$, $i=1,2$ are homeomorphic. Is it the case that $D_i$ are isomorphic?
\end{question}

In this generality the answer is negative, as we observe in \Cref{re:notforred} below. Nevertheless, {\it irreducible} bounded symmetric domains (i.e those that do not decompose non-trivially as Cartesian products of such; \Cref{def:bdd}) are better-behaved in this regard: the main result of the note is

\begin{theorem}[\Cref{th:shildet}]
  If two bounded irreducible symmetric domains have homotopy-equivalent Shilov boundaries then they are isomorphic.
\end{theorem}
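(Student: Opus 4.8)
The plan is to run through the Cartan classification of irreducible bounded symmetric domains and to separate them by homotopy invariants of their Shilov boundaries. Up to isomorphism $D$ is one of the series $I_{p,q}$ ($1\le p\le q$), $II_{n}$, $III_{n}$, $IV_{n}$, or one of the two exceptional domains $V$ (complex dimension $16$) and $VI$ (complex dimension $27$); the Shilov boundaries are, respectively (see \cite[\S 3]{kw} or \cite[\S 6]{loos}): the complex Stiefel manifold $U(q)/U(q-p)$ (so $U(n)$ when $p=q$ and $S^{2q-1}$ when $p=1$); the Lagrangian Grassmannian $U(n)/O(n)$ of symmetric unitary matrices; the ``quaternionic'' space $U(2m)/Sp(m)$ of antisymmetric unitary matrices (with an analogous non-tube description when $n=2m+1$ is odd); the mapping torus $(S^{n-1}\times S^{1})/\{\pm1\}$ of the antipodal map on $S^{n-1}$; and two specific compact homogeneous spaces of $\mathrm{Spin}(10)\cdot U(1)$ and $E_{6}\cdot U(1)$. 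I would first discard the low-rank coincidences --- $I_{1,1}\cong II_{2}\cong III_{1}\cong IV_{1}$, $I_{1,3}\cong II_{3}$, $I_{2,2}\cong IV_{4}$, $III_{2}\cong IV_{3}$, $II_{4}\cong IV_{6}$, and the reducibility of $IV_{2}$ --- so as to be left with a list on which each isomorphism class occurs once. These coincidences are automatically consistent with the statement, since the two names denote the same domain, hence literally homeomorphic Shilov boundaries.

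The toolkit is: (a) the real dimension of $\check S(D)$, which is a homotopy invariant of a closed manifold because $\bZ/2$-Poincar\'e duality --- valid without any orientability hypothesis --- gives $\dim M=\max\{k:H^{k}(M;\bZ/2)\neq 0\}$; (b) the fundamental group, which separates the tube-type cases (for which a norm map exhibits a surjection of $\pi_{1}$ onto $\bZ$) from the non-tube cases, simply connected in all cases on the list (e.g.\ the proper Stiefel manifolds $V_{p}(\bC^{q})$, $p<q$, are $2(q-p)$-connected); (c) the rational cohomology ring; and (d) the graded $\bZ/2$-Betti numbers, with the Steenrod squares available as a refinement. These are classical for the spaces above: $U(n)$ has cohomology $\Lambda(x_{1},x_{3},\dots,x_{2n-1})$; a proper Stiefel manifold has torsion-free cohomology $\Lambda(x_{2(q-p)+1},\dots,x_{2q-1})$; the quaternionic space $U(2m)/Sp(m)$ has $\pi_{1}=\bZ$ and torsion-free integral cohomology $\Lambda(x_{1},x_{5},\dots,x_{4m-3})$, hence total $\bZ/2$-Betti number $2^{m}$; the Lagrangian Grassmannian $U(n)/O(n)$ has $\pi_{1}=\bZ$ and the \emph{same} rational cohomology $\Lambda(x_{1},x_{5},\dots)$ as the corresponding quaternionic space but carries $2$-torsion, its mod-$2$ cohomology having total dimension $2^{n}$; and the type $IV$ mapping tori are rationally $S^{1}\times S^{n-1}$ ($n$ even) or $S^{1}$ ($n$ odd).

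The heart of the proof is a finite check that the tuple (real dimension, $\pi_{1}$, rational cohomology ring, $\bZ/2$-Betti numbers) is injective on the reduced list. Most separations are immediate from dimension, from $\pi_{1}$, or from the degrees of the rational cohomology generators ($1,3,5,\dots$ for $U(n)$; $1,5,9,\dots$ for the Lagrangian and quaternionic spaces; a single degree for a proper Stiefel manifold or a type $IV$ space; and the exceptional degrees arising for $V$ and $VI$). The one genuine subtlety in the classical range is that rational cohomology alone does \emph{not} separate type $II$ from type $III$: there are non-isomorphic pairs --- namely $II_{2m}$ versus $III_{2m-1}$ for every $m\ge 2$ (so $II_{4}$ versus $III_{3}$, $II_{6}$ versus $III_{5}$, and so on) --- whose Shilov boundaries both have real dimension $m(2m-1)$, fundamental group $\bZ$, and isomorphic rational cohomology rings. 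Here the $2$-torsion is decisive: $U(2m)/Sp(m)$ has total $\bZ/2$-Betti number $2^{m}$ while $U(2m-1)/O(2m-1)$ has $2^{2m-1}$, and since the two manifolds have equal real dimension only when the two domains coincide, this collapses the would-be collision. (One may instead invoke $Sq^{1},Sq^{2}$, which act nontrivially on $H^{*}(U(n)/O(n);\bZ/2)$.)

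I expect the main obstacle to lie with the exceptional domains $V$, $VI$ and the less standard non-tube members ($\check S(II_{2m+1})$): one must fix explicit models for these Shilov boundaries, compute their (co)homology, and rule out coincidences with classical Stiefel manifolds or Lagrangian Grassmannians of the same dimension and fundamental group --- for instance a simply connected Stiefel manifold happening to match the dimension of $\check S(V)$ --- which should be settled by the $E_{6}$- or $\mathrm{Spin}(10)$-flavoured torsion, or simply by the extra rational cohomology, carried by the exceptional examples. Granting the classical computations of the (co)homology of $U(n)$, $U(q)/U(q-p)$, $U(n)/O(n)$, $U(2m)/Sp(m)$ and the mapping tori above, together with the known structure of $\check S(V)$ and $\check S(VI)$, the remaining work is the organized case check just described.
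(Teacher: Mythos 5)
Your strategy is essentially the paper's: reduce to the Cartan classification, split tube from non-tube type via the rank of $\pi_1(\check S)$, and then run a finite case check on the dimension and cohomology of the standard homogeneous-space models. Within that, three comments. First, at the one step where the check is genuinely delicate --- separating the symmetric-matrix (Lagrangian) boundaries $U(n)/O(n)$ from everything else --- you use total mod-$2$ Betti numbers ($2^n$ versus $2^m$ for $U(2m)/Sp(m)$), whereas the paper uses $\pi_2$: the torsion of $\pi_1(O(n))$ forces $\pi_2(U(n)/O(n))\neq 0$ via the long exact sequence of $O(n)\to U(n)\to U(n)/O(n)$, while the remaining tube types have vanishing $\pi_2$. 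Both work. Second, your assertion that the $U(2m)/Sp(m)$ versus $U(2m-1)/O(2m-1)$ family is ``the one genuine subtlety'' overlooks a second collision of exactly the same kind: $\check S(IV_6)\cong\bS^1\times\bS^5$ and $U(3)/O(3)$ both have real dimension $6$, fundamental group $\bZ$, and rational cohomology $\Lambda(x_1,x_5)$; your mod-$2$ invariant ($4$ versus $8$) still resolves it, but the case has to be listed. (Relatedly, ``a single degree for a proper Stiefel manifold'' is a slip --- $U(q)/U(q-p)$ has $p$ exterior generators, and it is precisely the step of that arithmetic progression of degrees that matters below.) Third, you defer the non-tube type $II$ and the exceptional cases, and these are exactly where the paper does its nontrivial work: it shows $H^*(U(2r+1)/(Sp(r)\times U(1)))\cong H^*(\bS^5\times\cdots\times\bS^{4r+1})$ by trivializing a circle bundle, so the step-$4$ progression separates it from the step-$2$ complex Stiefel manifolds; it plays the $2$-torsion of the real Stiefel manifold $SO(10)/SO(7)$ against the torsion-freeness of complex Stiefel manifolds for type $V$; and for type $VI$ it uses the nonvanishing of $\pi_9(E_6/F_4)$ --- a safer invariant than rational cohomology, since $\check S(VI)$ is only a finite quotient of $\bS^1\times E_6/F_4$ and one would otherwise have to check that the deck action does not kill the relevant classes. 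In sum your plan is sound and your chosen invariants do suffice, but as written it is an outline whose exceptional and odd-type cases remain to be executed.
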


The proof is in \Cref{se:main}, after gathering some background in \Cref{se:prel}. As a separate matter, in \Cref{se:fundtors} we note that

\begin{theorem}[paraphrase of \Cref{th:notor}]
  Shilov boundaries of bounded symmetric domains have torsion-free fundamental groups. 
\end{theorem}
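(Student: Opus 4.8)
The plan is to reduce to the irreducible case and then run through the classification, reading off $\pi_1$ from the fibration presenting each Shilov boundary as a homogeneous space. First I would use the description of $\check S(D)$ as the set of extreme points of the compact convex set $\overline D$ recalled above: since $\overline{D_1\times D_2}=\overline{D_1}\times\overline{D_2}$ and the extreme points of a product of compact convex sets form the product of the sets of extreme points, one gets $\check S(D_1\times D_2)=\check S(D_1)\times\check S(D_2)$. As $\pi_1$ takes finite products to finite products and a finite product of torsion-free abelian groups is torsion-free, it is enough to prove the claim for $D$ irreducible.

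For irreducible $D$ the Shilov boundary is the compact connected homogeneous space $\check S(D)=K/L$, with $K$ the isotropy subgroup in a presentation $D=G/K$ of $D$ as a symmetric space and $L\le K$ the stabilizer of a fixed maximal tripotent. Running through the classification (see \cite{loos}), $\check S(D)$ is, up to diffeomorphism: the complex Stiefel manifold $U(q)/U(q-p)$ for type $I_{p,q}$ with $p\le q$; the space $U(2m)/Sp(m)$ for type $II_{2m}$ and $U(2m+1)/(Sp(m)\times U(1))$ for type $II_{2m+1}$; the Lagrangian Grassmannian $U(n)/O(n)$ for type $III_n$; the space $(\bS^1\times\bS^{n-1})/\bZ_2$, with $\bZ_2$ acting by the simultaneous half-rotation of $\bS^1$ and antipode of $\bS^{n-1}$, for type $IV_n$; and two further spaces, of the shapes $(E_6\times U(1))/(\bZ_3\cdot F_4)$ and a homogeneous space of $Spin(10)\cdot U(1)$, for the exceptional types $VI$ and $V$. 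For each I would feed $L\hookrightarrow K\to\check S(D)$ into the homotopy exact sequence
\[
  \pi_1(L)\longrightarrow\pi_1(K)\longrightarrow\pi_1(\check S(D))\longrightarrow\pi_0(L)\longrightarrow\pi_0(K)=0,
\]
so that $\pi_1(\check S(D))$ is an extension of $\pi_0(L)$ by $\operatorname{coker}\!\big(\pi_1(L)\to\pi_1(K)\big)$. Since $D$ is Hermitian, the center of $K$ is one-dimensional, so $\pi_1(K)$ has free rank $1$ (indeed $\pi_1(K)=\bZ$ in every type except $IV_n$, where $\pi_1(K)=\bZ\oplus\bZ/2$); one then checks case by case that the image of $\pi_1(L)$ is positioned so that this cokernel is torsion-free, equal to $0$ precisely when $L$ contains the pertinent central circle (as for $I_{p,q}$ with $p<q$ and for $II_{2m+1}$) and to $\bZ$ otherwise.

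The step that is not purely formal is the contribution of $\pi_0(L)$: for $III_n$ one has $L=O(n)$ with $\pi_0(L)=\bZ/2$, for $IV_n$ the stabilizer has two components, and for type $VI$ one meets $\pi_0(L)=\bZ/3$, so a priori $\pi_1(\check S(D))$ could acquire torsion. To rule this out I would, in each such case, exhibit a circle-valued function on $\check S(D)$ that is surjective on $\pi_1$ --- the determinant $Z\mapsto\det Z$ when $\check S(D)$ is realized as the space of symmetric unitary matrices (the Maslov class) for $III_n$, the natural map $(\bS^1\times\bS^{n-1})/\bZ_2\to\bS^1/\bZ_2\cong\bS^1$ for the Lie ball $IV_n$, and $(g,w)\mapsto w^{3}$ for type $VI$ --- and combine its surjectivity with the exact sequence above: the extension $0\to\bZ\to\pi_1(\check S(D))\to\bZ/d\to0$ is then forced to be cyclic, i.e. multiplication by $d$ on $\bZ$, whence $\pi_1(\check S(D))\cong\bZ$. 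The net outcome is that $\pi_1(\check S(D))$ equals $\bZ$ when $D$ is of tube type and is trivial otherwise, hence is free abelian of rank the number of tube-type irreducible factors of $D$ --- in particular torsion-free. I expect the main obstacle to be the two exceptional domains: pinning down the precise homogeneous-space presentations of $\check S(D_V)$ and $\check S(D_{VI})$ (for which I would rely on the tables in \cite{loos}) together with the low-degree homotopy of the compact groups $E_6$, $F_4$ and $Spin(10)$ (the simply connected forms of $E_6$ and $F_4$ having trivial $\pi_1$, and $Z(E_6)=\bZ/3$); of these, type $V$ looks the fiddliest.
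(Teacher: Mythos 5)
Your reduction to the irreducible case and your treatment of the classical non-tube types ($I_{p,q}$ with $p\neq q$ and $II_{2m+1}$) via the homotopy exact sequence of $L\hookrightarrow K\to K/L$ coincide with the paper's argument. For tube type, however, the paper argues uniformly rather than case by case: the Jordan-algebra norm $N\colon\check S\to\bS^1$ is a fibration whose fiber (the reduced Shilov boundary) is connected and simply connected, so $\pi_1(\check S)\cong\bZ$ follows at once for all tube types, including $III$, $IV$ and $VI$; your hand-built circle-valued maps are essentially instances of $N$. One step in your version is incomplete as stated: from the extension $0\to\bZ\to\pi_1(\check S)\to\bZ/d\to 0$ together with a surjection $\pi_1(\check S)\to\bZ$ you cannot conclude cyclicity, since $\bZ\oplus\bZ/2$ also surjects onto $\bZ$. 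You need the additional fact that the composite $\pi_1(K)\to\pi_1(\check S)\to\pi_1(\bS^1)$ is injective with image exactly $d\bZ$ (for $III_n$ this is the computation $\det(UU^t)=(\det U)^2$); that identifies the image of $\pi_1(K)$ with the full preimage of $d\bZ$, traps the kernel of the circle map inside a subgroup it meets trivially, and only then forces $\pi_1(\check S)\cong\bZ$.

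The more serious gap is type $V$, which you defer to ``the tables.'' The paper is explicit that the literature determines $\check S(D_V)$ only up to finite cover (sources differ on adjoint versus simply connected forms of $K$), so knowing that the universal cover is the simply connected Stiefel manifold $SO(10)/SO(7)$ still allows $\pi_1(\check S(D_V))$ to be any finite abelian group acting freely on it; the homotopy exact sequence of an ambiguously specified $K/L$ cannot close this. The paper needs a genuinely different argument (\Cref{pr:v}): since $\partial D\cong\bS^{31}$, Alexander duality gives $\End(\pi)\cong H^1(\check S,\pi)\cong H_{29}(\partial D\setminus\check S,\pi)$ for $\pi=\pi_1(\check S)$, and the complement $\partial D\setminus\check S$ fibers over a $21$-dimensional compact manifold with contractible fiber $D_{I_{5,1}}$, so this homology group vanishes and $\pi$ is trivial. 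The same up-to-finite-cover ambiguity affects your proposed presentation $(E_6\times U(1))/(\bZ_3\cdot F_4)$ for type $VI$, though there the paper's norm fibration makes the issue moot. Without an argument of this kind for type $V$, your proof does not go through.
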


\subsection*{Acknowledgements}

I am grateful for insightful comments from H. Upmeier and J. Wolf on these and related matters.

This work is partially supported by NSF grant DMS-1801011.

\section{Preliminaries}\label{se:prel}

The terminology is standard (e.g. \cite[\S VIII.7]{helg}):

\begin{definition}\label{def:bdd}
  A {\it bounded domain} is an open, bounded connected subset $D$ of a complex vector space $V\cong \bC^n$.

  A bounded domain $D$ is {\it symmetric} if every point $p\in D$ is an isolated fixed point of an involutive holomorphic automorphism of $D$.

  A bounded symmetric domain is {\it irreducible} if it does not decompose as a Cartesian product of positive-dimensional bounded-symmetric domains.
\end{definition}

Arbitrary bounded symmetric domains always decompose as Cartesian products of irreducible ones (\cite[Chapter VIII, Proposition 5.5]{helg}). In turn, by \cite[Chapter VIII, Theorem 7.1]{helg} the irreducible bounded symmetric domains are precisely the irreducible, non-compact {\it Hermitian symmetric spaces}, i.e. (\cite[Chapter VIII, Theorem 6.1]{helg}) the spaces of the form $G/K$ where
\begin{itemize}
\item $G$ is a non-compact, connected, simple Lie group of adjoint type (i.e. with trivial center);
\item $K\subset G$ is a maximal compact subgroup (which will then automatically have center isomorphic to the circle group $\bS^1$ \cite[Chapter VIII, Proposition 6.2]{helg}).
\end{itemize}

All of our bounded symmetric domains $D\subset V$ are assumed {\it circled} or {\it circular} in the sense that they
\begin{itemize}
\item contain the origin of $V$, and
\item are invariant under scaling by the modulus-1 complex numbers $\bS^1\subset \bC$. 
\end{itemize}
A non-compact Hermitian symmetric space can always be realized as a circular bounded symmetric domain, uniquely up to linear isomorphism; this is (essentially) the celebrated Harish-Chandra embedding theorem, due originally to \'E. Cartan \cite{cart-bdd} through by case-by-case verification: see e.g. \cite{hc6}, \cite[Theorem 1.6]{loos} or \cite[Chapter II, \S 4, especially Remarks 1 and 2 therein]{sat}.

With respect to the realization $D\subset V$ one can speak about the closure $\overline{D}$, the boundary $\partial D:=\overline{D}\setminus D$, and, most importantly for the discussion below, the Shilov boundary $\check{S}(D)$ (\Cref{def:shil}).

\subsection{Shilov boundaries}\label{subse:shil}

We describe the Shilov boundaries of the various irreducible symmetric domains, based on information available in \cite{viv}. The parameter ranges are chosen as they are in order to avoid low-rank isomorphisms, as listed in \cite[Table 7]{viv}.

\subsubsection{Type $I_{p,q}$, $p\ge q\ge 1$.}\label{sss:1}

It follows from \cite[Theorem 2.41]{viv} (see also \cite[Example 1.5.51]{up-bk}) that the manifold of maximal tripotents consists of $p\times q$ matrices $U$ of maximal rank with $UU^*U=U$, i.e. {\it isometries} of eh Hilbert space $\bC^q$ into the Hilbert space $\bC^p$.

Specifying such an isometry means selecting an orthonormal frame of $q$ vectors in $\bC^p$, i.e. the Shilov boundary is the {\it complex Stiefel manifold} denoted by $W_{p,q}$ in \cite[\S 9]{bor}. It is homeomorphic to the homogeneous space $U(p)/U(p-q)$. 

\subsubsection{Type $II_n$, $n\ge 5$.}\label{sss:2}

Here the bounded symmetric domain consists of skew-symmetric $n\times n$ matrices \cite[Theorem 2.41]{viv}, the action of the unitary group $K=U(n)$ is the usual one on bilinear forms, with $U\in U(n)$ acting by $U\cdot U^t$, and the Shilov boundary is an orbit of some maximal-rank skew-symmetric matrix. This means that we can identify $\check{S}(D)$ with
\begin{itemize}
\item $U(2q)/Sp(q)$ for even $n=2q$, where $Sp(q)$ is the compact symplectic group of $2q\times 2q$ matrices;
\item $U(2q+1)/Sp(q)\times \bS^1$ for odd $n=2q+1$. 
\end{itemize}

\subsubsection{Type $III_n$, $n\ge 2$.}\label{sss:3}

This is analogous to type $II$, except the matrices are now symmetric rather than skew-symmetric \cite[Theorem 2.41]{viv}. $\check{S}(D)$ is the orbit through the unitary group $U(n)$ of a maximal-rank symmetric matrix, so is identifiable with $U(n)/O(n)$. This is concurs with the listing of $U(n)/O(n)$ in \cite[p.131 (iii)]{fkklr}.

\subsubsection{Type $IV_n$, $n\ge 5$.}\label{sss:4}

These are the so-called {\it Lie balls}, and their respective Shilov boundaries, as worked out for instance in \cite[Example 1.5.52]{up-bk}, are the {\it Lie Spheres}
\begin{equation}\label{eq:liesp}
  \bL^n:=\bT\cdot\bS^{n-1}\subset \bC^n,
\end{equation}
where
\begin{itemize}
\item $\bT\subset \bC$ denotes the circle of modulus-1 complex numbers;
\item $\bS^{n-1}\subset \bR^n\subset \bC^n$ denotes the unit sphere embedded as usual, and  
\item `$\cdot$' means scaling. 
\end{itemize}
Note that \Cref{eq:liesp} is not quite a direct product, because for any $z\in \bT$ and $x\in \bS^{n-1}$ the points $z\cdot x$ and $(-z)\cdot (-x)$ coincide. In other words
\begin{equation}\label{lnquot}
  \bL^n\cong (\bS^1\times \bS^{n-1})/(\bZ/2),
\end{equation}
where the $\bZ/2$-action is antipodal on both factors. In fact, we have

\begin{lemma}\label{le:ln}
  $\bL^n$ fibers over the circle $\bS^1\cong \bT/\{\pm 1\}$ with fiber $\bS^{n-1}$. That bundle is trivial precisely when $n$ is even.
\end{lemma}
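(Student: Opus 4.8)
The plan is to work from the presentation \Cref{lnquot}, writing $\bL^n\cong(\bS^1\times\bS^{n-1})/(\bZ/2)$ with $\bZ/2$ acting antipodally on both factors. Projection to the first factor is equivariant if $\bZ/2$ acts antipodally on the target $\bS^1$, so it descends to a map $p\colon\bL^n\to B:=\bS^1/\{\pm 1\}$, and $B$ is again a circle (identify $\pm z$ with $z^2$). Since $\bS^1\to B$ is a free $\bZ/2$-action, i.e. a principal $\bZ/2$-bundle (the connected double cover of the circle), $\bL^n=\bS^1\times_{\bZ/2}\bS^{n-1}$ is the fiber bundle associated to it with fiber $\bS^{n-1}$ and structure group $\bZ/2\hookrightarrow\mathrm{Homeo}(\bS^{n-1})$ acting antipodally; equivalently, $\bL^n$ is the mapping torus of the antipodal map $a\colon\bS^{n-1}\to\bS^{n-1}$. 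This settles the fibration claim and reduces triviality of $p$ to whether the ``monodromy'' $a$ can be untwisted.

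For $n$ even I would untwist $a$ by hand. Identifying $\bR^n$ with $\bC^{n/2}$, the sphere $\bS^{n-1}\subset\bC^{n/2}$ inherits the scalar action of $\bS^1$, whose restriction to $\{\pm 1\}\subset\bS^1$ is precisely $a$. The self-homeomorphism
\[
  \psi\colon \bS^1\times\bS^{n-1}\longrightarrow \bS^1\times\bS^{n-1},\qquad \psi(z,x)=(z,\,z^{-1}x)
\]
(with inverse $(w,y)\mapsto(w,wy)$) intertwines the action ``antipodal on both factors'' on the source with the action ``antipodal on the first factor, trivial on the second'' on the target, since $\psi(-z,-x)=(-z,\,z^{-1}x)$. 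Passing to $\bZ/2$-quotients, $\psi$ induces a homeomorphism $\bL^n\xrightarrow{\ \sim\ }B\times\bS^{n-1}$ over $B$, i.e. a trivialization of $p$. (Alternatively one can invoke that mapping tori of isotopic homeomorphisms give isomorphic bundles over $\bS^1$, together with the isotopy $x\mapsto e^{\pi i t}x$, $t\in[0,1]$, in $SO(n)\subset\mathrm{Homeo}(\bS^{n-1})$ from $\id$ to $a$.)

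For $n$ odd I would instead show that the total space $\bL^n$ is non-orientable, whereas the total space $B\times\bS^{n-1}$ of the trivial bundle is orientable; this rules out any homeomorphism over $B$, hence a fortiori triviality of $p$. The free involution generating the $\bZ/2$-action on the oriented manifold $\bS^1\times\bS^{n-1}$ is $a_1\times a_2$ with $a_i$ the antipodal map of the respective sphere, and its degree is $\deg(a_1)\deg(a_2)=(-1)^{1+1}(-1)^{n}=(-1)^n$, which is $-1$ when $n$ is odd. A free orientation-reversing involution on a connected closed oriented manifold has non-orientable quotient (an orientation on the quotient would pull back to an invariant orientation of the total space, impossible for a connected manifold), so $\bL^n$ is non-orientable. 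One only needs $n\ge 2$ here, so that $\bS^{n-1}$ is connected, which is harmless in the intended range.

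The genuinely routine points are the local triviality of an associated bundle and the degree of the antipodal map; the step carrying real content is the odd case, where producing an honest obstruction to triviality — rather than, say, to the existence of a section, which $p$ always admits since $\bS^{n-1}$ is connected — forces one to look at the total space, and orientability is the cleanest invariant that does the job. I expect that to be the main (indeed the only real) obstacle.
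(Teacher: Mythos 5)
Your proof is correct, and while it starts from the same identification as the paper --- $\bL^n$ as the bundle $\bS^1\times_{\bZ/2}\bS^{n-1}$ over $B=\bS^1/\{\pm1\}$ associated to the connected double cover, i.e.\ the mapping torus of the antipodal map --- it settles (non-)triviality by different means. The paper invokes Steenrod's reduction to the associated principal $\mathrm{Homeo}(\bS^{n-1})$-bundle together with the classification of bundles over $\bS^1$ by $\pi_0$ of the structure group, so everything comes down to the path component of the antipodal map, detected by its degree. You instead give, for even $n$, an explicit equivariant untwisting $\psi(z,x)=(z,z^{-1}x)$ using the complex structure on $\bR^n\cong\bC^{n/2}$ (which in particular makes concrete the isotopy $\id\simeq a$ inside $SO(n)$ that the paper's ``orientation-preserving, hence in the trivial component'' step is implicitly leaning on), and, for odd $n$, the non-orientability of the quotient of $\bS^1\times\bS^{n-1}$ by a free orientation-reversing involution. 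Your odd case is in fact a bit stronger than what the lemma asks: it shows $\bL^n$ is not homeomorphic to $B\times\bS^{n-1}$ at all, not merely nontrivial as a bundle over $B$ (and it is the same observation the paper makes in the remark following the lemma, when correcting the claim of \cite[p.131 (iii)]{fkklr}). What the paper's route buys is uniformity --- one classification theorem handles both parities --- while yours is more elementary and self-contained, needing no bundle classification machinery. Your parenthetical caveats (the section always exists, so one must obstruct triviality of the bundle rather than of a section; $n\ge 2$ so that the total space is connected) are both accurate and harmless in the range $n\ge 5$ where the lemma is used.
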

\begin{proof}
  Consider the non-trivial principal $(\bZ/2)$-bundle over $B\cong \bS^1$ induced by the translation action of
  \begin{equation*}
    \bZ/2\cong \{\pm 1\}\subset \bS^1
  \end{equation*}
  (so the total and base space $E$ and $B$ respectively are both homeomorphic to the circle $\bS^1$). Regarding the sphere $\bS^{n-1}$ as a $(\bZ/2)$-space via the antipodal action, $\bL^n$ is by construction the fiber bundle
  \begin{equation*}
    E\times_{\bZ/2}\bS^{n-1}\to B
  \end{equation*}
  associated to this data (e.g. \cite[Chapter 4, \S 5 and especially Example 5.2]{hus}); by \cite[Theorem 8.2]{steen}, it will be trivial if and only if the corresponding principal bundle
  \begin{equation}\label{eq:princho}
    E\times_{\bZ/2}\cat{homeo}(\bS^{n-1})\to B
  \end{equation}
  is. The classification theorem for bundles over spheres (such as $B\cong \bS^1$; \cite[Theorem 18.3]{steen}) identifies \Cref{eq:princho} with the conjugacy class of the antipodal map in the path-component group $\pi_0(\cat{homeo}(\bS^{n-1}))$. It remains to observe that the antipodal map is
  \begin{itemize}
  \item orientation-preserving, and hence in the trivial path component of $\cat{homeo}(\bS^{n-1})$, for even $n$;
  \item orientation-reversing, and hence in the non-trivial path component otherwise.  
  \end{itemize}
\end{proof}

\begin{remark}
  Although \cite[p.131 (iii)]{fkklr} seems to claim $\bS^1\times \bS^{n-1}$ are Shilov boundaries for type-$IV$ domains, it is easy to see that this cannot be true for odd $n$: in that case the diagonal antipodal action of $\bZ/2$ on $\bS^1\times \bS^{n-1}$ reverses orientation, so the quotient \Cref{lnquot} is not orientable.
\end{remark}

\subsubsection{Type $V$.}\label{sss:5}

In general, we can obtain the Shilov boundary up to finite cover as the group listed as $K$ in \cite[Table 1]{viv} modulo its intersection with the group (for the corresponding type) listed as $L(D^0_{\text{type}})$ in \cite[\S 4.5]{viv}. That intersection is always a maximal compact subgroup in the latter. The reason why this only defines $\check{S}$ up to finite cover is that sources sometimes differ in listing the adjoint versus the simply-connected version of $K$, etc.

In any event, For type $V$ the information is sufficient to deduce the Shilov boundary is universally covered by $SO(10)/SO(7)$. We can do better however (see \Cref{pr:v} below).

\subsubsection{Type $VI$.}\label{sss:6} By the same reasoning as in the previous case, here $\check{S}(D)$ will be covered finitely by
\begin{equation*}
  \bS^1\times E_6/(\text{maximal compact subgroup of the real form }E_6(-26)) = \bS^1\times E_6/F_4,
\end{equation*}
i.e. the Cartesian product between a circle and the exceptional compact symmetric space typically denoted by $EIV$ (e.g. \cite[\S X.6, Table V]{helg}). Here $E_6$ denotes the simply-connected compact form of that exceptional Lie algebra, and the notation $E_6(-26)$ is explained in \cite[\S X.6.2]{helg}.

See also \cite[p.131 (iii)]{fkklr}, where the Shilov boundary in this case is displayed as $\bT\cdot E_6/F_4$; the `$\cdot$' symbol means ``almost Cartesian product'', i.e. Cartesian product up to finite covering.

\section{The main result}\label{se:main}

\begin{theorem}\label{th:shildet}
  The homotopy equivalence class of the Shilov boundary $\check{S}(D)$ completely determines the isomorphism class of the irreducible bounded symmetric domain $D$.
\end{theorem}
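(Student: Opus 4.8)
The plan is to turn the statement, via the Cartan classification, into a finite comparison and then separate the finitely many Shilov boundaries with a hierarchy of homotopy invariants. By \Cref{def:bdd} and \Cref{se:prel}, an irreducible $D$ is one of the types $I_{p,q},\,II_n,\,III_n,\,IV_n,\,V,\,VI$ with parameters in the irredundant ranges fixed in \Cref{subse:shil}, so the theorem asserts exactly that the Shilov boundaries computed there are pairwise non-homotopy-equivalent. Each $\check{S}(D)$ is a smooth closed manifold (an orbit of a compact group, or a Lie sphere $\bL^n$), so $d(D):=\dim\check{S}(D)$ is a homotopy invariant --- the top degree in which $H^\ast(\check{S}(D);\bZ/2)$ is nonzero. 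First I would tabulate $d(D)$ (thus $d(I_{p,q})=q(2p-q)$, $d(III_n)=n(n+1)/2$, $d(IV_n)=n$, $d(V)=24$, $d(VI)=27$, and $d(II_{2q})=q(2q-1)$, $d(II_{2q+1})=q(2q+3)$) and record the finitely many pairs of non-isomorphic $D_1,D_2$ with $d(D_1)=d(D_2)$. It is precisely the parameter restrictions of \Cref{subse:shil} --- for instance $n\ge 5$ in types $II$ and $IV$ --- that keep this list from already containing pairs such as $(I_{2,2},IV_4)$, for which $\check{S}(I_{2,2})=U(2)\cong\bS^1\times\bS^3=\bL^4=\check{S}(IV_4)$.

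For each coinciding-dimension pair I would peel off finer invariants in turn. The \emph{connectivity} (the largest $m$ with $\pi_i=0$ for all $i\le m$) equals $2(p-q)$ for the complex Stiefel manifold $U(p)/U(p-q)=\check{S}(I_{p,q})$, so together with $d$ it recovers $(p,q)$ when $p>q$, and it at once separates cross-type pairs in which one side is far more highly connected than the other. Next the \emph{fundamental group}: trivial for those Stiefel manifolds with $p>q$ and for $\bS^{2p-1}=\check{S}(I_{p,1})$; equal to $\bZ$ for $U(n)/O(n)$, for $U(2q)/Sp(q)$, for $U(p)=\check{S}(I_{p,p})$ and, by \Cref{le:ln}, for the Lie spheres $\bL^n$ with $n\ge 3$; finite for $\check{S}(V)$ (universally covered by $SO(10)/SO(7)$) and infinite (virtually $\bZ$) for $\check{S}(VI)$; thus $\pi_1$ isolates the domains whose Shilov boundary has infinite fundamental group. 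Finally the \emph{rational Poincar\'e polynomial}: for the homogeneous spaces in play $H^\ast(K/L;\bQ)$ is computed from the map $H^\ast(BK;\bQ)\to H^\ast(BL;\bQ)$ on classifying spaces, yielding $U(p)/U(p-q)$ rationally $\bS^{2(p-q)+1}\times\cdots\times\bS^{2p-1}$, both $U(n)/O(n)$ and $U(2q)/Sp(q)$ of exterior form $\Lambda(y_1,y_5,y_9,\dots)$, $\bL^{2k}\cong\bS^1\times\bS^{2k-1}$, $\bL^{2k+1}$ rationally $\bS^1$, and $E_6/F_4$ rationally $\bS^9\times\bS^{17}$; this separates nearly all the remaining pairs.

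What is left are exactly the pairs having the same dimension \emph{and} the same rational cohomology, which turn out to be $(III_{2q-1},II_{2q})$ for $q\ge 3$ --- both rationally $\Lambda(y_1,y_5,\dots,y_{4q-3})$ --- together with the single pair $(III_3,IV_6)$, both with rational Poincar\'e polynomial $(1+t)(1+t^5)$. Here the separating invariant is $2$-torsion, detected through the total mod-$2$ Betti number $\dim_{\bZ/2}H^\ast(\check{S}(D);\bZ/2)$: the complex Stiefel manifolds, the spaces $U(2q)/Sp(q)$, and the even Lie spheres $\bS^1\times\bS^{2k-1}$ are $2$-torsion-free, whereas $U(n)/O(n)$ has $2$-torsion for $n\ge 3$ --- it fibers over $\bS^1$ with fiber $SU(n)/SO(n)$, which for $n=3$ is the Wu manifold and in general carries $2$-torsion --- so for type $III$ the mod-$2$ Betti number strictly exceeds the rational one while for the others it does not (and failing that, one still has the full mod-$2$ cohomology ring with its Steenrod operations). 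I expect this to be the main obstacle: the coarse invariants, and even the rational homotopy type, genuinely fail to tell the ``Lagrangian'' boundaries $U(n)/O(n)$ apart from the ``symplectic'' boundaries $U(2q)/Sp(q)$, so the argument must descend to integral (equivalently mod-$2$) cohomology, and its technical heart is the clean dichotomy that the first family carries $2$-torsion while the second --- together with the Stiefel manifolds and the even Lie spheres --- does not.

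Two points then remain, both routine. For types $V$ and $VI$ the Shilov boundary is pinned down only up to a finite cover ($SO(10)/SO(7)$, resp. $\bS^1\times E_6/F_4$; see also the sharper \Cref{pr:v}), but that cover already determines $d$, all homotopy groups $\pi_i$ with $i\ge 2$, and the finiteness (resp. infiniteness) of $\pi_1$; this suffices --- via connectivity, $\pi_1$, and a single higher homotopy group ($\pi_7$, resp. $\pi_9$) --- to separate $V$ from the other domains of dimension $24$ ($I_{7,2}$, $I_{5,4}$, $IV_{24}$) and $VI$ from those of dimension $27$ ($I_{14,1}$, $I_{6,3}$, $II_7$, $IV_{27}$), hence from everything. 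And one must verify that no two domains of the same type have homotopy-equivalent Shilov boundaries; for types $I$--$IV$ this is immediate from the strict monotonicity of the dimension formulas in each parameter regime, together with the connectivity and $\pi_1$ computations above.
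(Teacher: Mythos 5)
Your proposal is correct in substance and shares the paper's overall strategy---reduce to the Cartan classification with the irredundant parameter ranges of \cref{subse:shil} and then separate the resulting list of Shilov boundaries by homotopy invariants---but it organizes the case analysis differently and uses different invariants at the decisive points. The paper first splits tube from non-tube domains via the rank of $\pi_1(\check{S})$ (citing \cite[Theorem 4.11]{kw}), which halves the bookkeeping, and then within each class uses dimension, a single higher homotopy group ($\pi_9$, for $VI$ versus $IV_{27}$), the nonvanishing of $\pi_2$ to isolate type $III$, and (integral) cohomology rings for the remainder; you compare everything by dimension directly and then run through connectivity, $\pi_1$, rational cohomology and finally $2$-torsion. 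Your identification of the genuinely hard coincidence---$U(2q-1)/O(2q-1)$ versus $U(2q)/Sp(q)$, which agree rationally---is accurate, and your resolution by $2$-torsion in cohomology (through the fibers $SU(n)/SO(n)$ over $\bS^1$) is a valid alternative to the paper's use of $\pi_2(U(n)/O(n))\neq 0$: both detect the same $\bZ/2$ coming from $\pi_1(O(n))$. Two imprecisions, neither fatal: there are in fact infinitely many pairs of non-isomorphic domains with equal Shilov-boundary dimension (type $IV$ realizes every dimension $\ge 5$, so it collides with every other family), so your ``finite comparison'' must really be a comparison of finitely many parametric families, which is what your argument in effect carries out; and your list of rational cohomologies omits the non-tube type $II$ boundaries $U(2r+1)/(Sp(r)\times U(1))$---rationally $\bS^{5}\times\bS^{9}\times\cdots\times\bS^{4r+1}$, the one computation for which the paper has to do real work (first showing the circle bundle $U(2r+1)/Sp(r)\to B$ is trivial)---and this is needed to separate, for instance, $II_{7}$ from $I_{6,3}$ in your dimension-$27$ list. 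Both points are absorbed by the invariants you already deploy, so I regard the proposal as a correct, mildly rearranged version of the paper's proof.
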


\begin{remark}\label{re:notforred}
  Note that irreducibility is crucial in \Cref{th:shildet}, even if we strengthen the relation to homeomorphism (rather than homotopy equivalence): by \Cref{le:ln} the irreducible type-$IV_{2n}$ domain has Shilov boundary homeomorphic to $\bS^1\times \bS^{2n-1}$, as does the reducible type $I_{1,1}\times I_{n,1}$.
\end{remark}

The proof of \Cref{th:shildet} will require some preparation. First, note that the dimension of the Shilov boundary is detectable by the homotopy equivalence class, so is an invariant we can use to distinguish between the various domains; this follows from the presumably well-known

\begin{lemma}\label{le:dimdet}
  Homotopy-equivalent compact boundary-less manifolds have the same dimension.
\end{lemma}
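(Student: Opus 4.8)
The plan is to recover the dimension of a compact boundary-less manifold purely from its homotopy type, which immediately forces homotopy-equivalent such manifolds to share a dimension. The cleanest route is through (singular) cohomology with coefficients in $\bZ/2$, which bypasses all orientability issues. First I would recall that for a closed (compact, boundary-less) $d$-manifold $M$, Poincar\'e duality with $\bZ/2$-coefficients holds unconditionally, and in particular $H^d(M;\bZ/2)\cong H_0(M;\bZ/2)\ne 0$ while $H^k(M;\bZ/2)=0$ for $k>d$ (the latter because $M$ has the homotopy type of a finite CW complex of dimension $d$, being a compact manifold — or alternatively because $H_k(M;\bZ/2)=0$ for $k>d$ by a covering-dimension argument). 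Hence
\[
  d=\max\{\,k:\ H^k(M;\bZ/2)\ne 0\,\},
\]
and the right-hand side is manifestly a homotopy invariant. Therefore if $M_1\simeq M_2$ are closed manifolds of dimensions $d_1,d_2$, then $d_1=d_2$.

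Two small points need care. One must know that a compact topological manifold has the homotopy type of a finite CW complex; for smooth or PL manifolds this is classical (Morse theory / triangulation), and for topological manifolds it follows from results of Kirby–Siebenmann or, more elementarily, from the fact that a compact manifold is an ANR and a finite-dimensional compact ANR is homotopy equivalent to a finite CW complex. Since in the application of this lemma the manifolds in question are the Shilov boundaries, which are smooth homogeneous spaces, one may as well assume smoothness and invoke the standard finite-CW-structure without comment. The other point is the vanishing $H^k(M;\bZ/2)=0$ for $k>d$: this is exactly where the finite-$d$-dimensional CW model is used, so the two points are really the same point.

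I do not expect a genuine obstacle here; the only thing resembling one is the temptation to use ordinary (rational or integral) Poincar\'e duality, which would require orientability and thus fail for nonorientable $M$ such as the odd Lie spheres $\bL^{n}$ appearing above. Working mod $2$ sidesteps this entirely. An alternative proof avoiding cohomology altogether: the dimension of a compact manifold equals its Lebesgue covering dimension, which for a compact ANR (hence for a compact manifold) coincides with the supremum of $k$ for which $\check{H}^k(M;\bZ/2)\ne0$ — but this is just the same statement dressed differently, and the cohomological formulation above is the most economical. One could also cite the invariance of domain / the fact that $H_d(M, M\setminus\{x\};\bZ/2)\cong\bZ/2$ detects $d$ locally, then note that local homology at a point is not a homotopy invariant but its top nonvanishing degree, taken globally, is; the global $\bZ/2$-Poincar\'e-duality argument packages this correctly and is what I would write.
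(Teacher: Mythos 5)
Your argument is correct and is essentially the paper's: both detect the dimension as the largest degree in which cohomology is non-zero, which is manifestly a homotopy invariant. The only (immaterial) difference is that the paper works with integral coefficients, citing \cite[Chapter VI, Theorem 7.8 and Corollary 7.14]{bred} for the fact that $H^{d}(M;\bZ)\ne 0$ even when $M$ is non-orientable (it is then $\bZ/2$ in top degree), so the orientability worry you sidestep with $\bZ/2$-coefficients is not actually an obstruction to the integral version.
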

\begin{proof}
  If $M$ is a compact manifold without boundary \cite[Chapter VI, Theorem 7.8 and Corollary 7.14]{bred} show that its dimension is the largest degree in which its integral cohomology $H^*(M,\bZ)$ is non-zero.
\end{proof}

Secondly, we can work separately within (non-)tube type: according to \cite[Theorem 4.11]{kw} an irreducible $D$ is
\begin{itemize}
\item of tube type precisely when the rank of (the finitely generated abelian group) $\pi_1(\check{S}(D))$ is 1;
\item of non-tube type if that rank vanishes (i.e. $\pi_1(\check{S}(D))$ is finite abelian).
\end{itemize}

 For that reason, the homotopy type of $\check{S}:=\check{S}(D)$ determines membership to the (non-)tube class, and it suffices to tackle \Cref{th:shildet} separately in those two cases. For tube domains, recall the following numerical data (see also \cite[Appendix]{chi-td}): 

\begin{table}[h]
  \centering
  \begin{tabular}{|c|c|c|}
    \hline
    type & $\dim_{\bR}$ & rank \\ \hline
    $I_{q,q},\ q\ge 1$ & $2q^2$ & $q$ \\ \hline
    $II_{2q},\ q\ge 3$ & $2q(2q-1)$ & $q$ \\ \hline
    $III_q,\ q\ge 2$ & $q(q+1)$ & $q$ \\ \hline
    $IV_q,\ q\ge 5$ & $2q$ & 2\\ \hline
    $VI$ & 54 & 3 \\ \hline
  \end{tabular}
  \caption{Tube-type domains}
  \label{tab:tube}
\end{table}

The dimension of the Shilov boundary $\check{S}$ is half that of $D$ \cite[Theorem 4.9]{kw}, so it can be read off from the dimension column in \Cref{tab:tube}.

\pff{tube}{th:shildet}{: tube type}
\begin{tube}
  The dimension of $\check{S}$ will suffice to distinguish within each numerical type ($I$, $II$, etc.) by \Cref{le:dimdet}, so we have to differentiate {\it between} numerical types. We do this in a number of steps.

  {\bf Type $VI$ differs from the rest.} From dimensional considerations (i.e. $27=\frac{54}2$ is neither a square nor triangular) the only possibility would be for $VI$ and $IV_{27}$ to have homotopy-equivalent Shilov boundaries. Keeping in mind the descriptions of the Shilov boundaries recalled in \Cref{subse:shil}, this is impossible because
  \begin{itemize}
  \item the fundamental groups of $\check{S}(IV_{27})$, which is finitely covered by $\bS^1\times \bS^{26}$, vanish in the range $2..26$, whereas
  \item according to \cite[Introduction]{hir-e6f4} the symmetric space $EIV$ has fundamental group $\pi_9$ of rank $\ge 1$ and hence $\check{S}(VI)$, which admits a finite cover by $\bS^1\times EIV$, has non-trivial $\pi_9$.
  \end{itemize}

  {\bf Type $III$ differs from the rest.} Simply note that from the descriptions of $\check{S}$ in \Cref{subse:shil} it follows that types $I$, $II$ and $IV$ all have vanishing $\pi_2$ whereas $III$ does not, because
  \begin{itemize}
  \item $\pi_2$ vanishes for all Lie groups, in particular for $U(q)$;
  \item $\pi_1(O(q))$ has torsion while $\pi_1(U(q))$ does not, and hence
  \item by the exact portion
    \begin{equation*}
      \cdots\to \pi_2(U(q))\to \pi_2(U(q)/O(q))\to \pi_1(O(q))\to \pi_1(U(q))\to \cdots
    \end{equation*}
    of the long exact homotopy sequence attached (\cite[Theorem 4.41]{hatch}) to the fibration of $U(q)$ by $O(q)\subset U(q)$ the range of the middle map must contain the (non-trivial) torsion of $\pi_1(O(q))$. 
  \end{itemize}

  {\bf The other cases: tube types $I$, $II$ and $IV$.} The integral cohomology rings of the Shilov boundaries listed in \Cref{sss:1,sss:2} for types $I$ and $II$ are cataloged in \cite[\S 9 and \S 31]{bor}:

  \begin{itemize}
  \item $H^*(U(q))$ (over $\bZ$ or, say, $\bR$) is a skew-symmetric algebra on generators of degrees $1,3,\cdots,4q-1$;
  \item $H^*(U(2p)/Sp(p))$ (similarly, over $\bZ$ or $\bR$) is a skew-symmetric algebra on generators of degrees $1,5,\cdots,4p-3$ so since for us $p\ge 3$, this means at least $1,5,9,\cdots$.
  \end{itemize}
  On the other hand, since the Lie ball $\bL^n$ admits a finite cover by $\bS^1\times \bS^{n-1}$, every finite cover of $\bL^n$ with fundamental group $\bZ$ is homeomorphic to that Cartesian product, and hence has Poincar\'e polynomial $(1+t)(1+t^{n-1})$. It is clear now that no two of these cohomology algebras can be isomorphic (as graded algebras) for the ranges listed in \Cref{tab:tube}.
\end{tube}

The non-tube analogue of \Cref{tab:tube} is 

\begin{table}[h]
  \centering
  \begin{tabular}{|c|c|c|c|}
    \hline
    type & $\dim_{\bR}$ & rank & $\dim ~\text{Shilov boundary}$\\ \hline
    $I_{p,q},\ p>q\ge 1$ & $2pq$ & $q$ & $2pq-q^2$\\ \hline
    $II_{2q+1},\ q\ge 2$ & $2q(2q+1)$ & $q$ & $2q^2+3q$\\ \hline
    $V$ & 32 & 2 & 24\\ \hline    
  \end{tabular}
  \caption{Non-tube-type domains}
  \label{tab:ntube}
\end{table}

\pff{ntube}{th:shildet}{: non-tube type}
\begin{ntube}
  The proof follows the same pattern as before, successively eliminating possible coincidences between the distinct numerical types in \Cref{tab:ntube}.

  {\bf Type $V$ differs from the rest.} Most cases are eliminated by dimension considerations: 24 cannot be written as $2q^2+3q$, so the only possibilities would be to have $\check{S}(V)$ homotopy equivalent to $\check{S}(I_{p,q})$ for $(p,q)=(5,4)$ or $(7,2)$. Either way, the universal cover $SO(10)/SO(7)$ of $\check{S}(V)$ would have to be homotopy-equivalent to the already-simply-connected complex Stiefel manifold $U(p)/U(p-q)$; this cannot happen:
  \begin{itemize}
  \item the complex Stiefel manifolds have torsion-free integral cohomology (\cite[Proposition 9.1]{bor}), whereas
  \item the real Stiefel manifold $SO(10)/SO(7)$ has the same integral cohomology as $SO(9)/SO(7)$ by \cite[Proposition 10.4]{bor}, and in turn that abelian group has torsion by \cite[Proposition 10.1]{bor}.
  \end{itemize}

  {\bf Conclusion: distinguishing non-tube types $I$ and $II$.} As noted in \Cref{sss:1,sss:2}, these are (via \Cref{tab:ntube})
  \begin{itemize}
  \item The complex Stiefel manifolds $U(p)/U(p-q)$ for $p>q\ge 1$, and 
  \item The homogeneous spaces $U(2r+1)/Sp(r)\times U(1)$.
  \end{itemize}
  We will again use cohomological invariants to tell them apart. The former, by \cite[Proposition 9.1]{bor}, has for its integral cohomology ring the exterior algebra generated by elements of odd degrees spanning $2(p-q)+1$ up to $2p-1$:
  \begin{equation}\label{eq:fori}
    H^*(\check{S}(I_{p,q}))\cong H^*(\bS^{2(p-q)+1}\times\cdots\times \bS^{2p-1}). 
  \end{equation}
  As for the former, we will have to supplement the methods of \cite{bor} slightly. Set
  \begin{equation*}
    B=U(2r+1)/Sp(r)\times U(1),
  \end{equation*}
  our target space. It is the base of a fibration with circle fiber $U(1)\cong \bS^1$ and total space
  \begin{equation*}
    E:=U(2r+1)/Sp(r).
  \end{equation*}
  The latter is treatable by the technique applied in \cite[Proposition 31.3]{bor} to the computation of the cohomology $H^*(U(2n)/Sp(n))$: the fiber $Sp(r)$ is totally non-homologous to zero in the sense of \cite[discussion preceding Proposition 4.1]{bor} (for integral cohomology), and, as in the proof of \cite[Proposition 31.3]{bor}, it follows that
  \begin{equation}\label{eq:tot}
    H^*(E)\cong H^*(\bS^{1}\times \bS^{5}\times\cdots\times\bS^{4r-3}\times \bS^{4r+1}). 
  \end{equation}

  On the other hand, the long exact homotopy sequence associated to the fibration of $U(2r+1)$ over $B$ shows that both $\pi_1$ and $\pi_2$ vanish for the latter, and hence so does its $2^{nd}$ integral (co)homology (e.g. by the Hurewicz theorem and the universal coefficient theorem: \cite[Theorems 4.32 and 3.2]{hatch}). But then the Chern class of the circle bundle $E\to B$ is trivial, and hence the bundle itself must be trivial; in other words,
  \begin{equation*}
    E\cong B\times \bS^1. 
  \end{equation*}
  It follows from the K\"unneth theorem (\cite[Theorem 3.16]{hatch}) and \Cref{eq:tot} that 
  \begin{equation*}
    H^*(B)\cong H^*(\bS^{5}\times\cdots\times\bS^{4r-3}\times \bS^{4r+1}). 
  \end{equation*}
  Since $r\ge 2$ this means at least two spheres, whose dimensions form an arithmetic progression with step 4, whereas the dimensions of the spheres in \Cref{eq:fori} form a progression with step 2. This shows that the two cohomology rings cannot be isomorphic, finishing the proof.
\end{ntube}

\section{An aside on fundamental-group torsion}\label{se:fundtors}

Although this is not central to the main discussion above, the present section records the fact that Shilov boundaries have torsion-free fundamental groups. To my knowledge this is not noted in the literature (some of which explicitly leaves open the possibility of having torsion; e.g. \cite[Theorem 4.11]{kw}).

The main result of the section is

\begin{theorem}\label{th:notor}
  Let $D$ Ne an irreducible bounded symmetric domain and $\check{S}:=\check{S}(D)$ its Shilov boundary. The fundamental group $\pi_1(\check{S})$ is
  \begin{enumerate}[(a)]
  \item\label{item:1} $\bZ$ when $D$ is of tube type;
  \item\label{item:2} trivial otherwise.
  \end{enumerate}
\end{theorem}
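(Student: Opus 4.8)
The plan is to go through the list of Shilov boundaries assembled in \Cref{subse:shil} and compute $\pi_1$ in each case, using the long exact homotopy sequence of the relevant homogeneous-space fibration. In every instance the Shilov boundary is (finitely covered by) a coset space $G/H$ with $G$ a compact Lie group and $H$ a closed subgroup, so the tail
\[
  \pi_1(H)\to \pi_1(G)\to \pi_1(G/H)\to \pi_0(H)\to \pi_0(G)
\]
of the homotopy sequence controls everything. Since all the groups in sight ($U(m)$, $O(m)$, $SO(m)$, $Sp(m)$, $\bS^1$, $F_4$, $E_6$, $SO(10)$, $SO(7)$, and the exceptional groups) are connected or have well-understood $\pi_0$, the computation reduces to linear algebra over $\bZ$ applied to the map $\pi_1(H)\to\pi_1(G)$ induced by inclusion, together with a surjectivity/injectivity bookkeeping at the $\pi_0$ end. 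For the cases where the description of $\check S$ in \Cref{subse:shil} is only up to finite cover, I would note that a finite cover of a space with torsion-free $\pi_1$ still has torsion-free $\pi_1$ only if we are careful — so instead I will pin down $\pi_1(\check S)$ exactly, exploiting the fact that the ambiguity in \Cref{sss:5,sss:6} is in choosing between the adjoint and simply connected form of $K$, neither of which affects the final quotient being simply connected (type $V$) or having $\pi_1\cong\bZ$ coming from the explicit $\bS^1$ factor (type $VI$).

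Concretely: for type $I_{p,q}$ the Shilov boundary is the complex Stiefel manifold $U(p)/U(p-q)$, which is simply connected when $p>q$ (the map $\pi_1(U(p-q))\to\pi_1(U(p))$ is an isomorphism $\bZ\to\bZ$) and has $\pi_1\cong\bZ$ when $p=q$ (then $U(p-q)$ is trivial and $\pi_1(U(q))\cong\bZ$). For type $III_n$ we have $U(n)/O(n)$; here $\pi_1(O(n))\to\pi_1(U(n))$ is the map $\bZ/2\to\bZ$, which is zero, and $\pi_0(O(n))\cong\bZ/2\to\pi_0(U(n))=1$, so the exact sequence gives $1\to\bZ\to\pi_1(U(n)/O(n))\to\bZ/2\to 1$; one then checks by hand (e.g. using that $SU(n)/SO(n)$ is simply connected, \cite[Chapter VII]{helg}-style, and that the determinant circle wraps twice) that the extension is the nonsplit one, $\pi_1\cong\bZ$. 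Type $III$ is a tube type, so this is consistent with case \eqref{item:1}. For type $II_n$ one runs the analogous argument with $Sp(q)\subset U(2q)$ (which is $2$-connected, so $\pi_1(U(2q)/Sp(q))\cong\pi_1(U(2q))\cong\bZ$, giving tube case \eqref{item:1}) and with $Sp(q)\times\bS^1\subset U(2q+1)$ (the $\bS^1$ now accounts for the determinant, so $\pi_1$ of the quotient is trivial, giving the non-tube case \eqref{item:2}). For type $IV_n$ the Lie sphere $\bL^n$ fibers over $\bS^1$ with fiber $\bS^{n-1}$ by \Cref{le:ln}; the homotopy sequence of that fibration gives $\pi_1(\bL^n)\cong\bZ$ for $n\ge 3$ (the $\pi_1$ of the $\bS^{n-1}$ fiber vanishes and there is no $\pi_0$ contribution), again the tube case. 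Types $V$ and $VI$: $\check S(V)$ is universally covered by $SO(10)/SO(7)$; but $SO(10)/SO(7)$ is the real Stiefel manifold $V_3(\bR^{10})$, which is simply connected ($\pi_1(SO(7))\to\pi_1(SO(10))$ is the identity $\bZ/2\to\bZ/2$ and $SO$'s are connected), so $\check S(V)$, being covered by a simply connected space, is itself simply connected; and $\check S(VI)$ carries an explicit circle factor over the simply connected $E_6/F_4$, yielding $\pi_1\cong\bZ$ (tube case).

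The main obstacle I anticipate is the type $III$ case: there the naive exact-sequence computation only determines $\pi_1(U(n)/O(n))$ up to an extension of $\bZ/2$ by $\bZ$, and one must rule out $\pi_1\cong\bZ\oplus\bZ/2$ (which would have torsion, contradicting the theorem) in favor of $\pi_1\cong\bZ$. The clean way to do this is to pass to the double cover: $U(n)/O(n)$ is double-covered by $SU(n)\times_{\det} \,\cdot$, or more simply one observes that the connected double cover $U(n)^{\sim}/O(n)$ corresponding to the index-two subgroup $SO(n)\subset O(n)$ sits in $1\to\bZ\to\pi_1\to 1$, i.e. is a space with $\pi_1\cong\bZ$ admitting a free $\bZ/2$ action, forcing the total $\pi_1$ to be the unique nonsplit extension $\bZ\xrightarrow{2}\bZ\to\bZ/2$, which is again $\bZ$. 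Equivalently, $U(n)/O(n)$ is homotopy equivalent to the space of Lagrangian subspaces / the ``Maslov'' bundle picture in which $\pi_1$ is detected by the squared-determinant map to $\bS^1$, manifestly $\bZ$. Once this point is settled, the remaining cases are routine homotopy-sequence bookkeeping, and assembling them type-by-type yields the dichotomy: tube types ($I_{q,q}$, $II_{2q}$, $III_q$, $IV_q$, $VI$) all have $\pi_1\cong\bZ$, and non-tube types ($I_{p,q}$ with $p>q$, $II_{2q+1}$, $V$) are simply connected.
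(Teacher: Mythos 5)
Your case-by-case computations for types $I$, $II$, $III$ and $IV$ are correct (and your resolution of the extension problem for $U(n)/O(n)$ via the squared determinant is a legitimate, if different, route to the tube-type answer there; the paper instead handles all tube types at once via the norm fibration $N\colon\check S\to\bS^1$, whose fiber, the reduced Shilov boundary, is connected and simply connected). But there is a genuine gap in your treatment of the exceptional types, and it is exactly where the real difficulty of the theorem lies. For type $V$ you argue that $\check S(V)$, ``being covered by a simply connected space, is itself simply connected.'' That implication is false: every reasonable space is covered by a simply connected space (its universal cover), and $\pi_1$ of the base is precisely the deck group of that cover, which the covering by itself does not determine --- compare $\bR P^n$ covered by $\bS^n$. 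All that \Cref{sss:5} gives you is that the universal cover of $\check S(V)$ is $SO(10)/SO(7)$, i.e.\ that $\pi_1(\check S(V))$ is some finite group; ruling out nontrivial torsion here requires new input. The paper supplies it in \Cref{pr:v} by a genuinely different argument: Alexander duality in $\partial D\cong\bS^{31}$ identifies $H^1(\check S,\pi)$ with $H_{29}(\partial D\setminus\check S,\pi)$, and the complement is shown to fiber over a $21$-dimensional compact manifold with contractible fibers, so that group vanishes.

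The same problem, in milder form, affects your type $VI$ argument. Knowing only that $\check S(VI)$ is \emph{finitely covered} by $\bS^1\times E_6/F_4$ tells you that $\pi_1(\check S(VI))$ contains $\bZ$ as a finite-index subgroup; but $\bZ\oplus\bZ/2$ also contains finite-index subgroups isomorphic to $\bZ$, so torsion is not excluded. Your appeal to ``the explicit $\bS^1$ factor'' presupposes a product decomposition of $\check S(VI)$ itself that \Cref{sss:6} does not provide (it only gives the decomposition up to finite cover, and explicitly warns that the identification of $K$ is ambiguous up to center). To close both gaps you either need the paper's uniform tube-type argument (the fibration $N\colon\check S\to\bS^1$ with connected, simply connected fiber forces $\pi_1(\check S)\cong\bZ$ on the nose, covering type $VI$) together with an Alexander-duality-style argument for type $V$, or some equally concrete identification of the actual isotropy groups rather than their identity components.
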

\begin{proof}

  {\bf \Cref{item:1}: tube type.} The conclusion follows obliquely from a number of remarks in the literature. According to \cite[Proposition 1.5.75 and surrounding discussion]{up-bk} there is a fibration
  \begin{equation}\label{eq:nfib}
    N:\check{S}\to \bS^1.
  \end{equation}
  $N$ is the {\it norm function} or {\it generalized determinant} of the Jordan algebra associated to $D$ (see e.g. \cite[Theorem 3.7, Lemma 3.8]{up-jord}, \cite[Proposition 1.6 and discussion preceding it]{up0}, or again \cite[\S 1.5]{up-bk}; in the latter $N$ is denoted by $\Delta$).

  The fiber of the bundle \Cref{eq:nfib} is the {\it reduced Shilov boundary}
  \begin{equation*}
    \check{S}':=\{x\in \check{S}\ |\ N(x)=1\}.
  \end{equation*}
  In turn, the latter is connected and simply-connected by \cite[p.63]{up-bk} (see also \cite[Proposition 2.2]{badi}, which cites the same source). The claim that $\pi_1(\check{S})\cong \bZ$ now follows from the homotopy exact sequence
  \begin{equation*}
    \cdots\to \pi_1(\check{S}')\to \pi_1(\check{S})\to \pi_1(\bS^1)\to \pi_0(\check{S}')\to \cdots,
  \end{equation*}
  which shows that the norm $N:\check{S}\to \bS^1$ induces an isomorphism of fundamental groups.

  {\bf \Cref{item:2}: non-tube type.} The non-tube domains are listed in \Cref{tab:ntube} and their boundary structures recalled in \Cref{sss:1,sss:2,sss:4}. For types $I$ and $II$ $\check{S}$ can be realized as a homogeneous space $K/L$ for compact connected Lie groups $L\subseteq K$ whose inclusion induces an isomorphism
  \begin{equation*}
    \pi_1(L)\cong \pi_1(K)
  \end{equation*}
  (e.g. for type $I_{p,q}$, $p>q$ we have $K=U(p)$ and $L=U(p-q)$ embedded as upper-right-corner matrices). The conclusion then follows from the portion
  \begin{equation*}
    \cdots\to \pi_1(L)\to \pi_1(K)\to \pi_1(K/L)\to \pi_0(L)\to \cdots
  \end{equation*}
  of the long exact homotopy sequence associated to the relevant fibration.

  We treat type $V$ separately in \Cref{pr:v}.
\end{proof}

\begin{proposition}\label{pr:v}
  The Shilov boundary $\check{S}$ of the irreducible bounded symmetric domain $D:=D_V$ of type $V$ is homeomorphic to $SO(10)/SO(7)$.
\end{proposition}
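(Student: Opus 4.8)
The plan is to reduce the statement to a fundamental‑group computation. Recall from \Cref{sss:5} that $\check{S}:=\check{S}(D_V)$ is \emph{finitely covered} by the real Stiefel manifold $SO(10)/SO(7)$ of orthonormal $3$‑frames in $\bR^{10}$; so once I know that this manifold is simply connected and that $\pi_1(\check{S})$ is trivial, the covering map must be a homeomorphism and the Proposition follows. Simple connectivity of $SO(10)/SO(7)$ is immediate from the homotopy exact sequence of the fibration $SO(7)\to SO(10)\to SO(10)/SO(7)$, since the standard inclusion $SO(7)\subset SO(10)$ induces an isomorphism $\pi_1(SO(7))\cong\pi_1(SO(10))$ (both groups being $\bZ/2$) and $SO(7)$ is connected. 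Thus the real content is the vanishing of $\pi_1(\check{S})$ — which is also precisely what was needed from this Proposition in the proof of \Cref{th:notor}.

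Following the recipe of \Cref{sss:5}, I would realize $\check{S}$ as a compact homogeneous space $K/L$, where $K$ is the (connected) isotropy group of the Hermitian symmetric space $EIII=E_6(-14)/K$ — a central quotient of $Spin(10)\times U(1)$, of real dimension $46$ — and $L=K\cap L(D^0_V)$ is a maximal compact subgroup of $L(D^0_V)$, a central quotient of $Spin(7)\times U(1)$, of real dimension $22$; here $46-22=24=\dim\check{S}$ as recorded in \Cref{tab:ntube}, and these are exactly the groups whose Lie algebras $\mathfrak{so}(10)\oplus\mathfrak{u}(1)\supset\mathfrak{so}(7)\oplus\mathfrak{u}(1)$ were used in \Cref{sss:5} to produce the cover $SO(10)/SO(7)$. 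Both $K$ and $L$ are connected, so $\pi_0(L)$ is trivial and the tail
\[
\pi_1(L)\longrightarrow\pi_1(K)\longrightarrow\pi_1(\check{S})\longrightarrow\pi_0(L)
\]
of the homotopy exact sequence of $L\to K\to K/L$ reduces the problem to showing that the first arrow is surjective — the same pattern used for the tube types and for non‑tube types $I$, $II$ in the proof of \Cref{th:notor}.

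To see the surjectivity: $Spin(7)$ and $Spin(10)$ are simply connected, so both $\pi_1(L)$ and $\pi_1(K)$ are infinite cyclic, their generators detected by loops in the circle factors together with the finite central identifications; since the circle in $L$ is (a finite‑index subgroup of) the circle in $K$ — which by \cite[Chapter~VIII, Proposition~6.2]{helg} is the centre $\bS^1$ of $K$ — a short computation with the relevant central subgroups of $Spin(10)\times U(1)$ shows that a generator of $\pi_1(K)$ already lies in the image of $\pi_1(L)$. Hence $\pi_1(\check{S})=1$, and combining with the first paragraph, $\check{S}\cong SO(10)/SO(7)$. An alternative that avoids the covering‑space input is to check directly that the $K$‑action on $\check{S}$ descends to a transitive action of $SO(10)$ with point‑stabiliser the standardly embedded $SO(7)$ (the central $\bS^1$ and the kernel of $Spin(10)\to SO(10)$ both acting trivially on the frame manifold $\check{S}$), which identifies $\check{S}\cong SO(10)/SO(7)$ outright.

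The hard part will be the central‑subgroup bookkeeping. Sources are not uniform about whether $E_6$ is taken in its adjoint or its simply connected form, hence about whether $K$ is presented as $Spin(10)\cdot U(1)$ with an order‑$4$ or an order‑$2$ central identification (the centre of $Spin(10)$ being $\bZ/4$), and similarly for $L$; one has to track these choices carefully to be certain that the cokernel of $\pi_1(L)\to\pi_1(K)$ genuinely vanishes — equivalently, in the alternative route, that the $SO(10)$‑stabiliser is exactly $SO(7)$ and not a proper subgroup. Everything else (the fibration exact sequences, $\pi_1$ of the classical groups, simple connectivity of spin groups) is routine.
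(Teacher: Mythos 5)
Your reduction is sound as far as it goes: granting that $SO(10)/SO(7)$ is simply connected (which your fibration argument correctly establishes) and that it finitely covers $\check{S}$, the proposition is indeed equivalent to the vanishing of $\pi_1(\check{S})$, and the exact sequence $\pi_1(L)\to\pi_1(K)\to\pi_1(K/L)\to\pi_0(L)$ correctly converts that into surjectivity of $\pi_1(L)\to\pi_1(K)$. The problem is that you never carry out the one step that actually decides the question. You assert that ``a short computation with the relevant central subgroups of $Spin(10)\times U(1)$ shows that a generator of $\pi_1(K)$ already lies in the image of $\pi_1(L)$,'' and then concede in your final paragraph that this central-subgroup bookkeeping is ``the hard part'' and that the sources are not uniform about which isogeny forms of $K$ and $L$ occur. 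That concession is fatal: the entire content of the proposition is whether the cokernel of $\pi_1(L)\to\pi_1(K)$ is trivial or a nontrivial finite group, and this is exactly what the ambiguity in the literature leaves open --- indeed \Cref{sss:5} of the paper says explicitly that the data available from \cite{viv} pins down $\check{S}$ only up to finite cover for precisely this reason. Your ``alternative route'' (showing the $SO(10)$-stabiliser is exactly $SO(7)$ and not a proper subgroup) is the same undischarged obligation in different clothing. As written, the proposal is a reduction to an unproved claim, not a proof.

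The paper avoids this impasse entirely by a different argument. It takes $\pi:=\pi_1(\check{S})$, known to be finite abelian by \cite[Theorem 4.11]{kw}, and applies Alexander duality in $\partial D\cong\bS^{31}$ to get $\End(\pi)\cong H^1(\check{S},\pi)\cong H_{29}(\partial D\setminus\check{S},\pi)$. It then uses \cite[Remark 6.9]{loos} and \cite[\S 4.5.5]{viv} to exhibit $\partial D\setminus\check{S}$ as a fibration with contractible fiber $D_{I_{5,1}}$ over a compact $21$-dimensional manifold, so its degree-$29$ homology vanishes, forcing $\End(\pi)=0$ and hence $\pi=1$. If you want to salvage your Lie-theoretic route, you must actually determine the precise central identifications in $K$ and $L$ (e.g.\ by fixing the simply connected form $E_6$, identifying $K$ as $(Spin(10)\times U(1))/(\bZ/4)$ with an explicitly described $\bZ/4$, and tracing where the circle factor of $L$ sits inside it) and exhibit the surjection $\pi_1(L)\twoheadrightarrow\pi_1(K)$ concretely; without that, the argument does not close.
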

\begin{proof}
  We already know that $\check{S}$ is universally covered by the Stiefel variety $V_{10,3}:=SO(10)/SO(7)$ (notation as in \cite[\S 10]{bor}). It thus remains to prove that $\check{S}$ is simply-connected.

  Denote
  \begin{equation*}
    \pi\cong \pi_1(\check{S}),
  \end{equation*}
  a finite abelian group according to \cite[Theorem 4.11]{kw}. Since the boundary $\partial D$ is homeomorphically a sphere $\bS^{31}$, the Alexander duality theorem (\cite[Chapter VI, Corollary 8.7]{bred}) for (co)homology with $\pi$ coefficients ensures that
  \begin{equation}\label{eq:aldual}
    \End(\pi)\cong H^1(\check{S},\pi)\cong H_{29}(\partial D\setminus \check{S},\pi). 
  \end{equation}
  In order to prove that $\pi$ is trivial it is thus enough to argue that \Cref{eq:aldual} vanishes. To see this, note that by \cite[Remark 6.9]{loos} and the description of $D_V^1$ in \cite[\S 4.5.5]{viv} the complement $Y:=\partial D\setminus \check{S}$ is the total space of a fibration over some compact manifold $X$, with the bounded symmetric domain $D_{I_{5,1}}$ as a fiber.

  Since the 10-dimensional manifold $D_{I_{5,1}}$ is contractible, $Y$ is homotopy-equivalent to the 21-dimensional manifold $X$. In particular, its degree-29 homology \Cref{eq:aldual} vanishes; as outlined above, this finishes the proof.
\end{proof}

\begin{remark}
  In Jordan-theoretic language, the manifold $X$ consists of the rank-1 tripotents in the Jordan triple system associated to $D_V$ (\cite[\S 5]{loos}). This is what in \cite[Chapter VIII, (8.19)]{sat} would be denoted by $K^{\circ}o^{(1)}$ (the `$(1)$' superscript denotes the rank). Loc.cit. provides the dimension of that manifold (see also \cite[Chapter VIII, p.136]{sat} for a definition of the symbol $\widetilde{V}_b$): it is the sum
  \begin{equation*}
    \dim U + \dim V = 21
  \end{equation*}
  for $U$ and $V$ listed under type $V$, $b=1$ on \cite[p.117]{sat}. This provides a numerical sanity check for the computed $\dim X=21$ in the proof of \Cref{pr:v}.
\end{remark}


\bibliographystyle{plain}
\addcontentsline{toc}{section}{References}

\Addresses

\end{document}